\newtheoremstyle{nodot}
{3pt}
{3pt}
{\itshape}
{}
{\bfseries} 
{}
{0pt}
{\thmnote{#3}}
\swapnumbers \theoremstyle{plain}
\newtheorem{theorem}{Theorem}[section]
\newtheorem{corollary}[theorem]{Corollary}
\theoremstyle{definition}
\newtheorem{definitions}[theorem]{Definitions}
\newtheorem{history}[theorem]{Historical remarks}
\newtheorem{notation}[theorem]{Notation}
\newtheorem{remark}[theorem]{Remark}
\newtheorem{key}[theorem]{Key points}
\def \reals {\mathbb{R}}
\def \naturals{\mathbb{N}}
\def \integers {\mathbb{Z}}
\def\into{\hookrightarrow}
\def\onto{\twoheadrightarrow}
\def\d1{\discretionary{-}{}{-}}
\renewcommand{\le}{\leqslant}
\renewcommand{\ge}{\geqslant}
\begin{document}

\title{A mnemonic for the  graded-case Golod-Shafarevich inequality}

\author{David  Anick}
\address{Laboratory for Water and Surface Studies, Department of Chemistry, Tufts University,
62 Pearson Rd., Medford MA02155, USA}
\email{david.anick@rcn.com}

\author{Warren Dicks*}
\address{Departament de Matem{\`a}tiques, Universitat Aut{\`o}noma de Barcelona, E-08193 Bellaterra (Barcelona), SPAIN}
\email{dicks@mat.uab.cat}
\thanks{*Corresponding author;  research supported by
MINECO (Spain) through project number MTM2014-53644}

\keywords{Golod-Shafarevich inequality,  Koszul resolution, Golod-Shafarevich $p$-group theorem}
\subjclass[2010]{Primary: 16W50; Secondary: 16E05, 20D15, 20F05}

\begin{abstract}
We draw attention to an easy-to-remember explanation for the graded-case inequality of Golod and Shafarevich.
We review, unify, and simplify some of the classic material on this inequality, thereby offering a new,
concise exposition for it.
\end{abstract}

\maketitle

Let $K$ be a field, and  \mbox{$B= \bigoplus\limits_{n \in \integers} B_n = K \langle X \mid R \rangle$} be
 a  $\integers$-graded,  associative  $K$-algebra that is presented with
a generating set $X$ and a relating set $R$, both of which are positively graded.
For each \mbox{$n\in \integers$}, set \mbox{$\mathbf{b}_n:= \dim_K(B_n)$}.
One form of the \textit{graded-case inequality of Golod and Shafarevich} is
\begin{equation}\label{eq:ineq}
\textstyle (\forall n \in \integers) \qquad \sum\limits_{x \in X} \mathbf{b}_{n-\deg(x)} \le
 (\sum\limits_{r \in R} \mathbf{b}_{n-\deg(r)})  +  \mathbf{b}_n. \vspace{-1mm}
\end{equation}
 Many important applications of this inequality can be found on its Wikipedia page~\cite{Wiki}.

The following is an easy\d1to\d1remember  explanation for this inequality.
The Koszul resolution\vspace{-1mm}
\begin{equation}\label{eq:ex}
\textstyle 0 \to \operatorname{Ker} \partial \to \bigoplus\limits_{r \in R} B
 \xrightarrow{\partial} \bigoplus\limits_{x \in X}  B  \to B \to K\to 0
\end{equation}
respects $\integers$-gradings;
hence, for each \mbox{$n \in \integers$}, the $n$th component of~\eqref{eq:ex}
is an exact sequence of $K$-modules
\begin{equation}\label{eq:graded}
\textstyle 0_n \to ( \operatorname{Ker}  \partial)_n  \to  \bigoplus\limits_{r \in R} B_{n-\deg(r)}
  \xrightarrow{\partial_n}
\bigoplus\limits_{x \in X}  B_{n-\deg(x)}  \to B_n \to K_n \to 0_n.\vspace{-2pt}
\end{equation}
The middle part is an exact sequence of $K$-modules\vspace{-1mm}
\begin{equation}\label{eq:ex2}
\textstyle \bigoplus\limits_{r \in R} B_{n-\deg(r)}
  \xrightarrow{\partial_n} \bigoplus\limits_{x \in X}  B_{n-\deg(x)}  \to B_n,
\end{equation}
and,  because $K$ is a field, the $K$-dimension of the inner term is at most the sum of the $K$-dimensions
of the two outer terms.  Hence,~\eqref{eq:ineq} holds, and there are not even any cardinality restrictions.

This concludes the main point of this note, but perhaps
lengthy explanatory remarks are in order.  As far as we know,   the preceding argument, which we
chanced upon in 1982, has
not appeared in print before now, and we
would be   interested to hear from anyone who knows that it has.
 There exist proofs of~\eqref{eq:ineq} in the literature which go via~\eqref{eq:graded} without mentioning~\eqref{eq:ex}
or~\eqref{eq:ex2};
see, for example, the original source~\cite{GS}, or Theorem~8.1.1.1 of~\cite{H}.
 There is a proof of  a special case of~\eqref{eq:ineq}, Theorem 2.3.4(i) of~\cite{EE}, that goes
via~\eqref{eq:ex}, bypassing ~\eqref{eq:graded} and~\eqref{eq:ex2}.
There also exist proofs in the literature which have~\eqref{eq:ex},~\eqref{eq:graded},
and~\eqref{eq:ex2} in  the background; see, for example,   Section~3.5 of~\cite{U}.

In what follows, with an eye toward how the material might be presented or taught as a coherent unit,
we provide a digest of five  topics: the   Golod-Shafarevich  $p$-group theorem;
the construction of the Koszul resolution for augmented algebras~\eqref{eq:ex};
its graded version~\eqref{eq:graded};  the Hilbert series form  of~\eqref{eq:ineq}; and,
the group-algebra analogue of the Koszul resolution.

We are grateful to Andrei Jaikin, Clas L\"ofwall, Dmitry Piontkovskii,  Jan-Erik Roos, and John Wilson for their expert
advice concerning the literature.

\section{The Golod-Shafarevitch $p$-group theorem}

As Bourbaki intended, we let \mbox{$\naturals$} denote the set of finite cardinals,
 \mbox{$\{0,1,2,3,\ldots,\}$}.

The following evolved through work of  Golod, Shafarevich, Gasch\"utz, Vinberg, and Serre;
 it may not have been expressed in this form before.

\begin{theorem} \label{thm:exact}  Let $K$ be a field, \mbox{$B = K \oplus \mathfrak{b}$} be an
augmented $K$-algebra, $X$ be a generating set for \mbox{$\mathfrak{b}$} as left $B$-module,
and $R$ be a relating set for \mbox{$\mathfrak{b}$} when generated by $X$.
   If   \, \mbox{$1 \le \vert X \vert =   \dim_K( \mathfrak{b}/ \mathfrak{b}^2) < \aleph_0 $}     and
 \mbox{$\vert R \vert \le   \frac{1}{4} \vert X \vert^2$}, then \mbox{$\dim_K(B) \ge \aleph_0$}.
\end{theorem}

\begin{proof}[Proof \normalfont (after Serre~\cite{S})] We have an exact left-$B$-module sequence
\begin{equation}\label{eq:needed}
\textstyle \bigoplus\limits_{R} B  \xrightarrow{\partial}
\bigoplus\limits_{X} B  \xrightarrow{\pi}  \mathfrak{b} \to 0, \vspace{-1mm}
\end{equation}
 where   \mbox{$\bigoplus\limits_{R} B$} denotes the direct sum of  copies of~$B$ indexed by $R$.

Let $n$ range over \mbox{$\naturals$}.

We first use the hypotheses that \mbox{$\vert X \vert = \dim_K( \mathfrak{b}/ \mathfrak{b}^2) < \aleph_0$}.
 Since $K$ is a field,  the surjective map\vspace{-1mm}
$$\textstyle   \bigoplus\limits_{X} (B/\mathfrak{b})   \xrightarrow{( B /{\mathfrak{b}})\otimes_B \pi}
\mathfrak{b}/ \mathfrak{b}^2$$ is injective.
Hence,
 \mbox{$\operatorname{Ker} \pi \subseteq    \bigoplus\limits_{X} \mathfrak{b}$}.  By the exactness of~\eqref{eq:needed},
\mbox{$ \partial (  \bigoplus\limits_{R} B ) = \operatorname{Ker} \pi \subseteq
 \bigoplus\limits_{X} \mathfrak{b}$}.
 By the left $B$-linearity of~$\partial$,
\mbox{$ \partial(  \bigoplus\limits_{R}  \mathfrak{b}^{n-1} )
\subseteq  \bigoplus\limits_{X}  \mathfrak{b}^{n}$}, where we define \mbox{$\mathfrak{b}^{-1}: = \mathfrak{b}^{0}: = B$}
and \mbox{$\mathfrak{b}^{n+1}:= \mathfrak{b}^{n}{\cdot} \mathfrak{b}$}.
On applying \mbox{$(B/\mathfrak{b}^{n} )\otimes_B -$} to~\eqref{eq:needed},
 we obtain  an exact left-$B$-module sequence\vspace{-2mm}
 $$\textstyle\bigoplus\limits_{R}  (B/\mathfrak{b}^{n})  \xrightarrow{\overline{\partial}}
\bigoplus\limits_{X} (B/\mathfrak{b}^{n})  \to \mathfrak{b}/\mathfrak{b}^{n+1} \to 0  \vspace{-1mm}$$
such that
\mbox{$\textstyle\overline{\partial}\bigl(\bigoplus\limits_{R}  (\mathfrak{b}^{n-1}/\mathfrak{b}^{n})\bigr)
 =  \{0\}.$} There is then induced an exact   left-$B$-module sequence \vspace{-2mm}
$$\textstyle\bigoplus\limits_{R} (B/\mathfrak{b}^{n-1})  \to
\bigoplus\limits_{X} (B/\mathfrak{b}^{n})  \to \mathfrak{b}/\mathfrak{b}^{n+1} \to 0. \vspace{-1mm}$$
Set \mbox{$\mathbf{a}_{n-2}:= \dim_K(B/\mathfrak{b}^{n-1})$}. Since $K$ is a field,  \mbox{$\textstyle
\mathbf{a}_{n}  {-} 1  \le \vert X \vert  {\cdot} \mathbf{a}_{n-1}  \le
\vert R \vert   {\cdot} \mathbf{a}_{n-2}  +  \mathbf{a}_{n}  {-}1.$}
By induction on $n$,
\mbox{$\mathbf{a}_n \le \sum\limits_{i=0}^{n} \vert X \vert^i < \aleph_0$}. \vspace{-2mm}

We next use the hypothesis that  \mbox{$\vert X \vert^2 - 4 \vert R \vert \ge 0$}.
Set \mbox{$\lambda:= \frac{\vert X \vert - \sqrt{\vert X \vert^2 - 4 \vert R \vert}}{2}$}
and \mbox{$\mu:= \frac{\vert X \vert  + \sqrt{\vert X \vert^2 - 4 \vert R \vert}}{2}$}.   Then
 \mbox{$0 \le \lambda \le \mu$}.
Set  \mbox{$\mathbf{b}_{n-1}:= \mathbf{a}_{n-1} {-} \lambda{\cdot} \mathbf{a}_{n-2} \in \reals$}.
Then $$\mathbf{b}_{-1} = \mathbf{a}_{-1} {-} \lambda{\cdot} \mathbf{a}_{-2} = 0\,\, \,\text{ and }\,\,\,\,
 \mathbf{b}_{n} {-} \mu{\cdot}\mathbf{b}_{n-1} =
 \mathbf{a}_{n} {-} (\lambda+\mu){\cdot}  \mathbf{a}_{n-1} + \mu{\cdot} \lambda{\cdot} \mathbf{a}_{n-2}
= \mathbf{a}_{n} {-}  \vert X \vert{\cdot}  \mathbf{a}_{n-1} + \vert R \vert{\cdot} \mathbf{a}_{n-2}  \ge 1,\vspace{-1mm}
$$
by the previous paragraph.  By induction on $n$,  \mbox{$ \mathbf{b}_n \ge \sum\limits_{i=0}^{n} \mu^i$}.  Now,
 \mbox{$\sum\limits_{i=0}^{n} \mu^i \le  \mathbf{b}_n = \mathbf{a}_{n} {-}
 \lambda{\cdot} \mathbf{a}_{n-1} \le \mathbf{a}_{n}$}.\vspace{-2mm}
See also Remark~\ref{rem:serre} below.

It remains to use the hypothesis \vspace{-1mm} that  \mbox{$\vert X \vert  \ge
1$}.  Here,  \mbox{$\mu \ge 1$}, for if
 \mbox{$\vert X \vert   < 2$}, then  \mbox{$\vert X \vert   =1$}, hence
\mbox{$0 =  \lfloor\frac{1}{4} \vert X \vert ^2\rfloor \ge \vert R \vert$}, and, hence, \mbox{$\mu =1$}.
Now \mbox{$n{+}1 \,\le \,  \sum\limits_{i=0}^{n} \mu^i \, \le \,\mathbf{a}_n \le \dim_K(B)$}  and, hence,  \mbox{$\dim_K(B) \ge
\aleph_0$}.
\end{proof}

\vspace{-4mm}

\begin{remark}  In the foregoing proof,
 \mbox{ $\textstyle\sum\limits_{i=0}^{n} \vert X \vert^i  \,\, \ge\,\,
\mathbf{a}_n  \,\,\ge \,\, \sum\limits_{i=0}^{n} \mu^i.$}
If \mbox{$(\vert X\vert, \vert R \vert)$} is neither \mbox{$(1,0)$} nor  \mbox{$(2,1)$},
then \mbox{$\mu > 1$} and, hence, the growth rate of \mbox{$\mathbf{a}_n$} is exponential.
\end{remark}

\begin{history} For  details about the following, see~\cite{GS} and~\cite{S}.

Let $p$ be a prime number, and $G$ be a nontrivial,
finite $p$-group.  Set \mbox{$K:= \integers/p \integers$} and   \mbox{$B:=KG$}, the group algebra.
Let \mbox{$\mathfrak{b}$} denote the kernel of  the $K$-algebra homomorphism
\mbox{$B \to K$} which carries $G$ to~$\{1\}$.
For each \mbox{$n \in \naturals$}, set \mbox{$\mathbf{d}_n:= \dim_{K} \bigl(\operatorname H_n(G,K)\bigr)$}.
Recall that \mbox{$\operatorname H_1(G,K) =  \mathfrak{b}/ \mathfrak{b}^2$}.
 From the theory of minimal resolutions, it is known that
there exist exact  left-$B$-module sequences of the form
\mbox{$ \cdots \to   B^{\mathbf{d}_3}  \to  B^{\mathbf{d}_2}   \to
 B^{\mathbf{d}_1} \to \mathfrak{b} \to 0.$}
By Theorem~\ref{thm:exact},  \mbox{$\mathbf{d}_2 > \frac{1}{4}\mathbf{d}_1^2$}.

It is known that   \mbox{$\mathbf{d}_1$} equals the minimum
number of elements it   takes to generate $G$ as  a pro-$p$ group,
and that for any generating set  of  \mbox{$\mathbf{d}_1$} elements,
\mbox{$\mathbf{d}_2$} equals the minimum
number of relations it  takes to present $G$ as  a pro-$p$ group.
(By the Burnside basis theorem, \mbox{$\mathbf{d}_1$} equals the minimum
number of elements it   takes to generate $G$ as  a  group.
For any generating set  of  \mbox{$\mathbf{d}_1$} elements,
the minimum number of relations it  takes to present $G$ as  a group is at least
 \mbox{$\mathbf{d}_2$}, but it is not known if equality holds.)

 The main objective of Golod and Shafarevich in~\cite{GS}, and the reason for which~\eqref{eq:ineq} was first developed,
was to prove that  \mbox{$\mathbf{d}_2 > \frac{1}{4}(\mathbf{d}_1{-}1)^2$}.
 It followed from this,  together with an earlier result of Shafarevich,
 that the   class-field-tower problem had a negative solution,
that is, there do exist  infinite class-field towers.
Gasch\"utz and Vinberg~\cite{V}  independently   refined
 the inequality to   \mbox{$\mathbf{d}_2 > \frac{1}{4}\mathbf{d}_1^2$}.
 Serre~\cite{S} gave the above proof of this refined inequality.
Nevertheless, there still remain many applications of~\eqref{eq:ineq} which have
not been superseded.
\end{history}

\section{The Koszul resolution for an augmented algebra}\label{sec:one}

\begin{notation}\label{not:augmented} Let $K$ be a field,  $X$ be a set,   $F$ be the free associative $K$-algebra on $X$,
and $\mathfrak{f}$ be the two-sided
ideal of $F$ generated by $X$.
 We write \mbox{$F = K \langle X \rangle$}.

Let $R$ be a family of elements of $\mathfrak{f}$, possibly with repetitions,
 and   $\mathfrak{r}$ denote the two-sided ideal of $F$
generated by the elements of $R$.  Set \mbox{$B := F/\mathfrak{r}$}  and
\mbox{$\mathfrak{b}:= \mathfrak{f}/\mathfrak{r}$}.
In summary, \mbox{$B= K \oplus \mathfrak{b}$} is an augmented associative $K$-algebra  presented with generating set $X$
and relating set~$R$.  We write   \mbox{$B = K \langle X \mid R \rangle$}.

Set
\mbox{$K^{(X)} := \bigoplus\limits_{x \in X}Kx$},\vspace{1mm}
\mbox{$F^{(X)}:= F \otimes_K K^{(X)}$},  and \mbox{$B^{(X)}:= B \otimes_K K^{(X)}$}; these are
the free left modules  on~$X$ over $K$, $F$, and $B$, respectively.
Similar notation will apply\vspace{1mm} with $R$ in place of $X$.  At one stage,
we shall use the natural $K$-centralizing $K$-bimodule structure of \mbox{$K^{(R)}$}.
\end{notation}

\begin{definitions}\label{defs:augmented} Each element $f$ of $\mathfrak{f}$ has a unique
expression as a left
$F$-linear combination of the elements of $X$, and we shall write this as
\mbox{$f= \sum\limits_{x \in X} \frac{\partial f}{\partial x} {\cdot} x$}.

We have an isomorphism of left $F$-modules
\begin{equation*}
\textstyle  \mathfrak{f}  \xrightarrow{\sim}  F^{(X)}, \qquad
 f = \sum\limits_{x\in X} \frac{\partial f}{\partial x}
{\cdot}  x \mapsto \textstyle \sum\limits_{x\in X} \frac{\partial f}{\partial x} \otimes x.\vspace{-1mm}
\end{equation*}
On applying    \mbox{$(F/\mathfrak{r})\otimes_F -$}, we obtain an isomorphism of left
 \mbox{$ F/\mathfrak{r}$}-modules  $$\textstyle \mathfrak{f}/\mathfrak{rf}  \xrightarrow{\sim}   B^{(X)},
\qquad f + \mathfrak{rf} \mapsto \textstyle \sum\limits_{x\in X}
 (\mkern-2mu\frac{\partial f}{\partial x}+\mathfrak{r} )\otimes x.$$
\vspace{-4mm}

We  have also a surjection of $F$-bimodules
\begin{equation*}
\textstyle F \otimes_K K^{(R)} \otimes_ K F  \onto \mathfrak{r}, \qquad
f_1 \otimes r \otimes f_2 \mapsto f_1{\cdot} r{\cdot} f_2.
\end{equation*}
On applying    \mbox{$(F/\mathfrak{r}) \otimes_F - \otimes_F (F/\mathfrak{f})$},  we obtain
a surjection of left \mbox{$F/\mathfrak{r}$}-modules
 $$\textstyle B^{(R)}
  \onto    \mathfrak{r}/\mathfrak{rf}, \qquad   (f+\mathfrak{r})\otimes r \mapsto f{\cdot} r + \mathfrak{rf}.$$

The cokernel of the composite \mbox{$ B^{(R)}
  \onto    \mathfrak{r}/\mathfrak{rf} \into  \mathfrak{f}/\mathfrak{rf} \xrightarrow{\sim}   B^{(X)}$}
is isomorphic to \mbox{$\mathfrak{f}/\mathfrak{r}$}, which is \mbox{$\mathfrak{b}$}.  We then have
an exact left-$B$-module sequence\vspace{-1mm}
\begin{equation}\label{eq:exact}
\textstyle  B^{(R)}  \xrightarrow{b\otimes r \mapsto \sum\limits_{x \in X}
b{\cdot} (\mkern-2mu\frac{\partial r}{\partial x}+ \mathfrak{r})\otimes   x}
 B^{(X)}
\xrightarrow{ b \otimes x \mapsto b{\cdot} (x+\mathfrak{r})} \mathfrak{b} \to 0.
\end{equation}
On splicing~\eqref{eq:exact} and
\mbox{$0 \to \mathfrak{b} \to B  \to B/\mathfrak{b} \to 0$},
we obtain what we  call \textit{the Koszul resolution}\vspace{-1mm}
\begin{equation*}
\textstyle 0 \to \operatorname{Ker} \partial \to  B^{(R)}  \xrightarrow{\partial:b\otimes r
\mapsto \sum_{x \in X}\mkern-4mu b{\cdot} (\mkern-2mu\frac{\partial r}{\partial x}+ \mathfrak{r})\otimes   x}
 B^{(X)}
\xrightarrow{ b \otimes x \mapsto b{\cdot} (x+\mathfrak{r})} B  \to B/\mathfrak{b} \to 0.
\end{equation*}
The part  that interests us is\vspace{-4mm}
\begin{equation}\label{eq:exact3}
\textstyle  B^{(R)}  \xrightarrow{b\otimes r \mapsto \sum\limits_{x \in X} b{\cdot}
(\mkern-2mu\frac{\partial r}{\partial x}
+ \mathfrak{r})\otimes   x}
 B^{(X)}
\xrightarrow{ b \otimes x \mapsto b{\cdot} (x+\mathfrak{r})} B.
\end{equation}
\end{definitions}

\begin{remark}
Suppose that the induced map \mbox{$(B/\mathfrak{b})^{(X)} \to \mathfrak{b}/ \mathfrak{b}^2$} is bijective
or, equivalently,  that each element of  $R$ lies in \mbox{$\mathfrak{f}^2$}.  If $X$ is a finite, nonempty set
and \mbox{$\vert R \vert  \le   \frac{1}{4} \vert X \vert ^2 $},  then applying
Theorem~\ref{thm:exact}   to~\eqref{eq:exact} shows that    \mbox{$\dim_K(B) = \aleph_0$}.
\end{remark}

\section{The graded case of the Koszul resolution}\label{sec:graded}

Continuing with the notation developed in Section~\ref{sec:one}, we now hypothesize
 a $\integers$-graded $K$-algebra structure for~$B$, as follows.

Let \mbox{$\deg:X \to \naturals {-}\{0\}$},
\mbox{$x \mapsto \deg(x)$}, be any map; there is then an induced
$\integers$-graded $K$-algebra structure  \mbox{$F= \bigoplus\limits_{n \in \integers} F_n$}
with \hskip-1mm \mbox{$\bigoplus\limits_{n \in \integers-\naturals} \hskip-2mm F_n = \{0\}$},
\mbox{$F_0 = K$},
\hskip-3.3mm \mbox{$\bigoplus\limits_{n \in \naturals {-}\{0\}} \hskip-3.3mm F_n = \mathfrak{f}$}, and
\mbox{$x \in F_{\deg(x)}$}  for  each   \mbox{$x \in X$}.

We henceforth restrict to the case where each element of $R$ lies in \hskip-3.3mm
\mbox{$\bigcup\limits_{n \in \naturals {-}\{0\}}\hskip-3.3mm  F_n$}.
There is then an induced $\integers$-graded $K$-algebra structure
\mbox{$B= \bigoplus\limits_{n \in \integers} B_n$} with \hskip-1mm
\mbox{$\bigoplus\limits_{n \in \integers-\naturals} \hskip-2mm B_n = \{0\}$}, \mbox{$B_0 = K$},
\hskip-3.3mm  \mbox{$\bigoplus\limits_{n \in \naturals {-}\{0\}}\hskip-3.3mm B_n = \mathfrak{b}$},
 and \mbox{$x + \mathfrak{r} \in B_{\deg(x)}$}
for  each   \mbox{$x \in X$}. We choose a map  \mbox{$\deg:  R\to \naturals{ -}\{0\}$},
\mbox{$r \mapsto \deg(r)$}, such that \mbox{$r \in F_{\deg(r)}$};
thus, as in~\cite{GS},  each occurrence  of $0$ in $R$   has some positive finite degree.
Notice that \mbox{$\frac{\partial r}{\partial x} \in F_{\deg(r)-\deg(x)}$}.

Let $n$ range over   \mbox{$\integers$}.   Set \mbox{$\mathbf{b}_n:= \dim_K(B_n)$}.
Now \eqref{eq:exact3} gives an exact sequence of degree-$n$ $K$-modules
\begin{equation*}
\textstyle \bigoplus\limits_{r \in R} (B_{n-\deg(r)} \otimes_K Kr) \xrightarrow{b \otimes r \mapsto \sum\limits_{x \in X}
b {\cdot}(\mkern-2mu\frac{\partial r}{\partial x} + \mathfrak{r}) \otimes x}
 \bigoplus\limits_{x \in X} (B_{n-\deg(x)} \otimes_K K x) \xrightarrow{ b \otimes x \mapsto b{\cdot}(x+\mathfrak{r})}
B_n.
\end{equation*}
  Since $K$ is a field, we have one form of the Golod-Shafarevich inequality:
\begin{equation}\label{eq:ineqbis}
\textstyle \sum\limits_{x \in X} \mathbf{b}_{n-\deg(x)} \le
 (\sum\limits_{r \in R} \mathbf{b}_{n-\deg(r)})  +  \mathbf{b}_n. \vspace{-1mm}
\end{equation}
Set  \mbox{$X_n:=  \{x \in X : \deg(x) = n\}$} and
\mbox{$\mathbf{x}_n := \left\vert X_n \right\vert$}.  Then
$$\textstyle\sum\limits_{x \in X} \mathbf{b}_{n-\deg(x)}= \sum\limits_{i \in \integers}
 \sum\limits_{x \in X_i} \mathbf{b}_{n-\deg(x)}
=   \sum\limits_{i \in \integers}   \mathbf{x}_i{\cdot} \mathbf{b}_{n-i}.\vspace{-1mm}$$
Similarly,   set   \mbox{$R_n:=  \{r \in R : \deg(r) = n\}$} and
\mbox{$\mathbf{r}_n := \vert R_n\vert$}; then
$\sum\limits_{r \in R} \mathbf{b}_{n-\deg(r)} =   \sum\limits_{i \in \integers}
\mathbf{r}_i {\cdot} \mathbf{b}_{n-i}$.
Now~\eqref{eq:ineqbis} becomes\vspace{-1mm}
\begin{equation}\label{eq:ineq2}
\textstyle
\mathbf{b}_n +  \sum\limits_{i \in \integers}   \mathbf{r}_i {\cdot}\mathbf{b}_{n-i} \ge
 \sum\limits_{i \in \integers}   \mathbf{x}_i {\cdot} \mathbf{b}_{n-i}.
\end{equation}

\section{Hilbert series}\label{sec:two}

Let $t$ be a new variable.  We shall express elements of the \vspace{-2mm} power-series ring \mbox{$\reals[[t]]$}
in the form \mbox{$\sum\limits_{n \in \integers}  a_nt^n $}, and understand that \mbox{$a_n = 0$} if \mbox{$n \le -1$}. Set
$$\textstyle P:= \{ \sum\limits_{n \in \integers} a_nt^n \in \reals[[t]]
: a_n \ge 0 \text{ for all } n \in \integers \}.\vspace{-1mm}$$
Then \mbox{$P$} is both an additive submonoid  and a multiplicative submonoid  in \mbox{$\reals[[t]]$}.
Let $\succeq$ be the relation on \mbox{$\reals[[t]]$} such that \mbox{$\alpha \succeq \beta$} if and only if
\mbox{$\alpha - \beta \in P$}.

\begin{remark}\label{rem:serre} In terms of this relation, the penultimate paragraph of the proof
of Theorem~\ref{thm:exact} says,
since $$
\textstyle(1-\lambda t){\cdot}(1 - \mu t){\cdot}(\sum\limits_{n \in \integers}
\mathbf{a}_{n}t^n) \,\,\,=\,\,\,(1 - \vert X \vert {\cdot} t + \vert R \vert{\cdot}t^2){\cdot}(\sum\limits_{n \in \integers}
\mathbf{a}_{n}t^n)\,\,\,\succeq \,\,\,   (1-t)^{-1},$$
$$\textstyle \sum\limits_{n \in \integers} \mathbf{a}_{n}t^n
\,\,\, \succeq \,\,\,   (1 - \mu t)^{-1}{\cdot}(1-\lambda t)^{-1} {\cdot}(1-t)^{-1} \,\,\, \succeq \,\,\,
(1 -  \mu t)^{-1} {\cdot}  (1-t)^{-1}.
$$
\end{remark}

Continuing with the notation developed in Section~\ref{sec:graded}, we henceforth restrict to the case where
 \mbox{$\mathbf{x}_n$}, \mbox{$\mathbf{r}_n \in \naturals$}; as in the proof of Theorem~\ref{thm:exact},
\mbox{$\mathbf{b}_n \in \naturals$}.
 We   define the \textit{Hilbert series} of $B$, $X$, and $R$,
 to be the elements of   \mbox{$\reals[[t]]$} given by
 \mbox{$\operatorname{H}(B):= \sum\limits_{n \in \integers} \mathbf{b}_nt^n$},
  \mbox{$\operatorname{h}(X):= \sum\limits_{n \in \integers} \mathbf{x}_nt^n$},
  and  \mbox{$\operatorname{h}(R):= \sum\limits_{n \in \integers} \mathbf{r}_nt^n$}, respectively.
Notice that the constant terms are $1$, $0$, and $0$, respectively.
Now~\eqref{eq:ineq2} says that \mbox{$\operatorname{H}(B) + \operatorname{h}(R) {\cdot} \operatorname{H}(B)
 \succeq \operatorname{h}(X) {\cdot} \operatorname{H}(B)  $}.  Hence,
\mbox{$\bigl(1 -  \operatorname{h}(X) + \operatorname{h}(R)\bigr){\cdot}\operatorname{H}(B) \succeq 0$}.
By considering the constant terms, we see that
\begin{equation}\label{eq:GS}
\bigl(1 -  \operatorname{h}(X) + \operatorname{h}(R)\bigr){\cdot}\operatorname{H}(B)\succeq 1;
\end{equation}
this is esentially  Lemma~2 of~\cite{GS}.
In fact, one can read directly from~\eqref{eq:ex} that
 $$\textstyle\bigl(1 -  \operatorname{h}(X) + \operatorname{h}(R)\bigr){\cdot}\operatorname{H}(B)
 - \operatorname{H}( \operatorname{Ker}  \partial) \,\, =\,\, \operatorname{H}(K)\,\,=\,\, 1.$$

\begin{key}\label{key:key}  Consider any \mbox{$\gamma  \in t{\cdot}\reals[[t]]$}.

If   \mbox{$\gamma  \succeq \operatorname{h}(R)$}, then
$
\bigl(1 -  \operatorname{h}(X)
+ \gamma \bigr){\cdot}\operatorname{H}(B)
 \,\,\, \succeq  \,\,\,
\bigl(1 -  \operatorname{h}(X) + \operatorname{h}(R)\bigr){\cdot}\operatorname{H}(B)
  \,\,\,   \succeq   \,\,\,    1.  $

If it is also the case that  \mbox{$ (1 -  \operatorname{h}(X) + \gamma \bigr)^{-1}  \succeq 0$},
then    \mbox{$\operatorname{H}(B)
 \,\,\,   \succeq   \,\,\,   \bigl(1 -  \operatorname{h}(X) + \gamma \bigr)^{-1}  \,\,\,  \succeq  \,\,\,  0.  $}

If it is further the case that $X$ is  finite and \mbox{$\gamma \ne  \operatorname{h}(X)$}, or, more generally,
that \mbox{$ (1 -  \operatorname{h}(X) + \gamma \bigr)^{-1} \not\in  \reals[t]$},
then   \mbox{$
\operatorname{H}(B)
$}   has infinitely many nonzero coefficients, and, hence,
  \mbox{$\dim_K(B)= \aleph_0$}.
\end{key}

\medskip

Finally, we restrict to the case where $X$ is concentrated in degree 1.

\begin{corollary}[{\normalfont{Golod}}\textbf]\label{cor:gol} Let $K$ be a field,  $X$ be a finite, nonempty set,
and \mbox{$\varepsilon$} be an element of
\mbox{$\left[0,\frac{\vert X \vert}{2}\, \right]$}. For each integer \mbox{$n \ge 2$},
let   $R_n$ be a  family of $X$-homogenous elements in
\mbox{$ K\langle X   \rangle $}
 of $X$-degree~$n$
such that \mbox{$\ \vert R_n \vert  \le  \varepsilon ^2 (\vert X \vert - 2\varepsilon )^{n-2}$}.
$\bigl($When \mbox{$\varepsilon  = \frac{\vert X \vert -1}{2}$}, this says
 \mbox{$ \vert R_n \vert  \le (\frac{ \vert X \vert -1  }{2})^2$}.$\bigr)$
Then \mbox{$\dim_K\bigl(K\langle X \mid \bigcup\limits_{n \ge 2} R_n \rangle\bigr)= \aleph_0$}.\vspace{-2mm}
\end{corollary}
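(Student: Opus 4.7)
The plan is to apply Key~\ref{key:key} with \mbox{$R := \bigcup\limits_{n \ge 2} R_n$} (so \mbox{$\operatorname{h}(X) = \vert X \vert t$}) and with an auxiliary series $\gamma$ chosen both to dominate $\operatorname{h}(R)$ coefficient-wise and to make \mbox{$(1 - \operatorname{h}(X) + \gamma)^{-1}$} transparent.

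Guided by the hypothesis \mbox{$\vert R_n \vert \le \varepsilon^2 (\vert X \vert - 2\varepsilon)^{n-2}$}, which uses \mbox{$\vert X \vert - 2\varepsilon \ge 0$}, I would set
$$\gamma \,\,:=\,\, \textstyle\sum\limits_{n \ge 2} \varepsilon^2 (\vert X \vert - 2\varepsilon)^{n-2}\, t^n \,\,=\,\, \frac{\varepsilon^2 t^2}{1 - (\vert X \vert - 2\varepsilon)t} \,\,\in\,\, t{\cdot}\reals[[t]],$$
so that \mbox{$\gamma \succeq \operatorname{h}(R)$} term-by-term. The crux of the argument is the algebraic identity
$$(1 - \vert X \vert t)\bigl(1 - (\vert X \vert - 2\varepsilon)t\bigr) + \varepsilon^2 t^2 \,\,=\,\, \bigl(1 - (\vert X \vert - \varepsilon)t\bigr)^2,$$
which, upon clearing denominators, yields
$$(1 - \operatorname{h}(X) + \gamma)^{-1} \,\,=\,\, \frac{1 - (\vert X \vert - 2\varepsilon)t}{\bigl(1 - (\vert X \vert - \varepsilon)t\bigr)^2}.$$

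Expanding the right-hand side via \mbox{$(1 - at)^{-2} = \sum\limits_{n \ge 0} (n+1)a^n t^n$} with \mbox{$a := \vert X \vert - \varepsilon > 0$} (using \mbox{$\vert X \vert \ge 1$} and \mbox{$\varepsilon \le \vert X \vert/2$}), a short computation produces coefficient of $t^n$, for \mbox{$n \ge 1$}, equal to \mbox{$(\vert X \vert - \varepsilon)^{n-1}\bigl(\vert X \vert + (n-1)\varepsilon\bigr)$}, which is strictly positive. Hence \mbox{$(1 - \operatorname{h}(X) + \gamma)^{-1} \succeq 0$} and lies outside \mbox{$\reals[t]$}, so the last assertion of Key~\ref{key:key} gives \mbox{$\dim_K(B) = \aleph_0$}.

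The only step that requires any real thought is spotting the factorization identity above; it is this perfect-square collapse that explains the precise shape of the bound \mbox{$\varepsilon^2(\vert X \vert - 2\varepsilon)^{n-2}$} in the hypothesis, and everything else reduces to routine power-series manipulations.
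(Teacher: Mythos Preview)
Your proof is correct and is essentially the paper's own argument: you choose the same $\gamma$, obtain the same factorization (the paper writes it via $\alpha := 1-(\vert X\vert-\varepsilon)t$, $\beta := \varepsilon t$, so that $1-\vert X\vert t+\gamma = \alpha^2/(\alpha+\beta)$), and then invoke Key~\ref{key:key}. The only cosmetic difference is that the paper reads off nonnegativity and non-polynomiality from the split $(1-\operatorname{h}(X)+\gamma)^{-1}=\tfrac{1}{\alpha}+\tfrac{\beta}{\alpha^2}$ together with $\vert X\vert>\varepsilon\ge 0$, whereas you compute the coefficients explicitly.
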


\begin{proof} Set \mbox{$\gamma    := \textstyle
\sum\limits_{n \ge 2} \bigl(\varepsilon ^2 (\vert X \vert - 2\varepsilon )^{n-2} t^n\bigr) =
\sum\limits_{m \ge 0} \bigl(\varepsilon ^2 (\vert X \vert - 2\varepsilon )^{m} t^mt^2\bigr) =
 \frac{\varepsilon ^2 t^2}{1- (\vert X \vert - 2\varepsilon )t}$}. \vspace{.5mm}

Set
\mbox{$\alpha:= 1- (\vert X \vert - \varepsilon) t$}  \, and\,   \mbox{$\beta:=  \varepsilon t$}.
Then  \mbox{$\alpha-\beta = 1 -  \vert X \vert t$},\,\,  \mbox{$\alpha +\beta =
1- (\vert X \vert - 2\varepsilon )t$}
,\,   \mbox{$\gamma
=  \frac{\beta^2}{\alpha +\beta}$},
\begin{align*}
& \textstyle 1 -  \vert X \vert t + \gamma  =
 (\alpha-\beta) +  \frac{\beta^2}{\alpha +\beta}
= \frac{\alpha^2}{ \alpha+\beta},
 \quad\text{and}\quad \bigl(1 -  \vert X \vert t + \gamma )^{-1}  =   \frac{1}{\alpha} + \frac{\beta}{\alpha^{2}}.
\end{align*}
The result now follows by~\ref{key:key}, since \mbox{$\vert X \vert > \varepsilon \ge 0$}.
\end{proof}

\begin{history} Golod~\cite{G} then used Corollary~\ref{cor:gol} to create new phenomena:
  finitely generated, non-nilpotent, nil algebras and
 infinite, residually finite, finitely generated    $p$-groups, for each prime $p$.
These were the first  infinite, finitely generated torsion groups.\end{history}

\section{The Fox resolution for group algebras}

We now recall the group-algebra analogue of the Koszul resolution.

 \begin{notation} Let
$G$ be a group.
Let \mbox{$\operatorname{d}(G)$} denote the smallest of those cardinals $\kappa$ such that $G$ can be generated by $\kappa$ elements.
Let \mbox{$G'$} denote the derived subgroup of $G$, and set \mbox{$G^{\text{ab}}:= G/G'$}, the
abelianization of $G$.

Let   \mbox{$\langle X \mid R \rangle$} be a presentation for $G$.
 Clearly,  \mbox{$\operatorname{d}(G^{\text{ab}}) \le \operatorname{d}(G) \le \vert X \vert$}.

Let $K$ be a field and set  \mbox{$B:=KG$}, the group algebra.  Let \mbox{$\mathfrak{b}$} denote the
kernel of  the $K$-algebra homomorphism
\mbox{$B \to K$} which carries $G$ to $\{1\}$.
Let $F$ be the  group algebra over $K$
for the free group on $X$,
   $\mathfrak{f}$~be the two-sided
ideal of $F$ generated by \mbox{$\{x-1 \mid x \in X\}$}, and
$\mathfrak{r}$ be the two-sided ideal of $F$
generated by \mbox{$\{r-1 \mid r \in R\}$}.  Then \mbox{$B  = F/\mathfrak{r}$}  and
\mbox{$\mathfrak{b} = \mathfrak{f}/\mathfrak{r}$}.

Set \mbox{$K^{(X)} := \bigoplus\limits_{x \in X}Kx$},
\mbox{$F^{(X)}:= F \otimes_K K^{(X)}$}, and \mbox{$B^{(X)}:= B \otimes_K K^{(X)}$}, and similarly
with $R$ in place of $X$.
\end{notation}

\begin{definitions}\label{defs:groupring} It is not difficult to see that the left ideal of $F$ generated by
\mbox{$\{x-1 \mid x \in X\}$} is closed under right multiplication by the elements of
\mbox{$X \cup X^{-1}$}, and, hence, is the whole of   \mbox{$\mathfrak{f}$}.  We have a left-$F$-module map
  \mbox{$F^{(X)} \to \mathfrak{f}$} which sends each \mbox{$1 \otimes x$} to \mbox{$x-1$};
to construct an inverse, we shall define a left-$F$-module map
  \mbox{$ \mathfrak{f} \to F^{(X)}$} which sends each \mbox{$x-1$} to \mbox{$1 \otimes x$}.

We view \mbox{$F^{(X)}$} as an \mbox{$(F,K)$}-bimodule, and form the
bimodule-algebra over $K$  suggestively written in matrix form as
\mbox{$\left(\begin{smallmatrix}
F &F^{(X)}\\ 0 &K
\end{smallmatrix}\right)$}.  There then exists a unique $K$-algebra homomorphism
\mbox{$\left(\begin{smallmatrix}
\phi_{1,1} &\phi_{1,2}\\ 0 &\phi_{2,2}
\end{smallmatrix}\right): F \to \left(\begin{smallmatrix}
F &F^{(X)}\\ 0 &K
\end{smallmatrix}\right)$} which sends each \mbox{$x \in X$} to
the invertible element
\mbox{$\left(\begin{smallmatrix}
x &1 \otimes x\\ 0 &1
\end{smallmatrix}\right)$}.  Thus, the \mbox{$\phi_{i,j}$} are $K$-module maps, \mbox{$\phi_{1,1}(1) = 1$},  \mbox{$\phi_{1,2}(1) = 0$},
\mbox{$\phi_{2,2}(1) = 1$}, and, for all $f$, \mbox{$g \in F$},
$$\phi_{1,1}(f {\cdot}g) = \phi_{1,1}(f) {\cdot}\phi_{1,1}(g),\hskip 5pt
 \phi_{1,2}(f {\cdot}g) = \phi_{1,1}(f)  {\cdot} \phi_{1,2}(g)+\phi_{1,2}(f)  {\cdot} \phi_{2,2}(g),
  \text{  and }
 \phi_{2,2}(f {\cdot}g) = \phi_{2,2}(f) {\cdot} \phi_{2,2}(g).$$  In particular, \mbox{$\phi_{1,1}$} and \mbox{$\phi_{2,2}$} are
$K$-algebra homomorphisms.  Also, for all \mbox{$x \in X$},
$$\phi_{1,1}(x) = x, \hskip5pt\phi_{1,2}(x) = 1\otimes x, \text{ and } \phi_{2,2}(x) = 1.$$
In particular, \mbox{$\phi_{1,1}$} is the identity map on $F$, and \mbox{$\phi_{2,2}(\mathfrak{f}) = \{0\}$}.
Hence, \mbox{$\phi_{1,2}$} restricted to \mbox{$\mathfrak{f}$}  is
a left $F$-module map \mbox{$\mathfrak{f} \to F^{(X)}$} which sends each
\mbox{$x-1$} to   \mbox{$ 1\otimes x$}, as desired.  Now
each element $f$ of $\mathfrak{f}$ has a unique
expression  as a left
$F$-linear combination of the elements of \mbox{$\{x-1 \mid x \in X\}$}, which we write as
\mbox{$f= \sum\limits_{x \in X} \frac{\partial f}{\partial (x-1)}{\cdot} (x-1)$}.\vspace{-2mm}

We have an isomorphism of left $F$-modules
\begin{equation*}
\textstyle  \mathfrak{f}  \xrightarrow{\sim}  F^{(X)}, \qquad
 f = \sum\limits_{x\in X} \frac{\partial f}{\partial (x-1)}{\cdot}
 (x-1) \mapsto \textstyle \sum\limits_{x\in X} \frac{\partial f}{\partial (x-1)} \otimes x.\vspace{-1mm}
\end{equation*}
On applying    \mbox{$(F/\mathfrak{r})\otimes_F -$}, we obtain an isomorphism of left
 \mbox{$ F/\mathfrak{r}$}-modules  $$\textstyle \mathfrak{f}/\mathfrak{rf}  \xrightarrow{\sim}   B^{(X)},
\qquad f + \mathfrak{rf} \mapsto \textstyle \sum\limits_{x\in X}
 (\mkern-2mu\frac{\partial f}{\partial (x-1)}+\mathfrak{r} )\otimes x.$$
\vspace{-4mm}

We  have also a surjection of $F$-bimodules
\begin{equation*}
\textstyle F \otimes_K K^{(R)} \otimes_ K F  \onto \mathfrak{r}. \qquad
f_1 \otimes r \otimes f_2 \mapsto f_1{\cdot}(r-1){\cdot}f_2.
\end{equation*}
On applying    \mbox{$(F/\mathfrak{r}) \otimes_F - \otimes_F (F/\mathfrak{f})$},  we obtain
a surjection of left \mbox{$F/\mathfrak{r}$}-modules
 $$\textstyle B^{(R)}
  \onto    \mathfrak{r}/\mathfrak{rf}, \qquad   (f+\mathfrak{r})\otimes r \mapsto f{\cdot}(r-1) + \mathfrak{rf}.$$

 The cokernel of the composite \mbox{$ B^{(R)}
  \onto    \mathfrak{r}/\mathfrak{rf} \into  \mathfrak{f}/\mathfrak{rf} \xrightarrow{\sim}   B^{(X)}$}
is isomorphic to \mbox{$\mathfrak{f}/\mathfrak{r}$}, which is \mbox{$\mathfrak{b}$}.  We then have
an exact left-$B$-module sequence\vspace{-1mm}
\begin{equation}\label{eq:group}
\textstyle  B^{(R)}  \xrightarrow{ \partial:b\otimes r \mapsto \sum_{x \in X}
 b{\cdot}(\mkern-2mu\frac{\partial (r-1)}{\partial (x-1)}+\mathfrak{r})\otimes   x}
 B^{(X)}
\xrightarrow{  b \otimes x \mapsto b{\cdot}(x-1+\mathfrak{r})} \mathfrak{b} \to 0.
\end{equation}
\end{definitions}

\begin{theorem}\label{thm:new}  Let \mbox{$\langle X \mid R \rangle$} be
a presentation of a nontrivial, finite group $G$.
If  \mbox{$\vert X \vert = \operatorname{d}(G^{\text{\normalfont ab}})$},
then    \mbox{$\vert R \vert > \frac{1}{4}\vert X \vert^2$}; equivalently,
if  \mbox{$\operatorname{d}(G) = \operatorname{d}(G^{\text{\normalfont ab}}) =
\vert X \vert$}, then
   \mbox{$\vert R \vert > \frac{1}{4} (\operatorname{d}(G))^2$}.
\end{theorem}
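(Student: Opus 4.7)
The plan is to apply Theorem~\ref{thm:exact} to the group algebra $B := KG$ for a carefully chosen prime field $K$, taking the Fox resolution~\eqref{eq:group} as the required exact sequence~\eqref{eq:needed}. First I would record that the two formulations of the hypothesis are interchangeable: the Notation already gives $d(G^{\text{ab}}) \le d(G) \le |X|$, so $|X| = d(G^{\text{ab}})$ forces $d(G) = d(G^{\text{ab}}) = |X|$, and conversely. In particular, $|X|$ is a nonnegative integer; since $G$ is nontrivial we have $|X| \ge 1$, whence $G^{\text{ab}}$ is a nontrivial finite abelian group.

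Next I would pick the field. Writing $G^{\text{ab}}$ in invariant-factor form $\integers/n_1 \oplus \cdots \oplus \integers/n_k$ with $2 \le n_1 \mid n_2 \mid \cdots \mid n_k$ and $k = d(G^{\text{ab}}) = |X|$, I would let $p$ be any prime divisor of $n_1$ and set $K := \integers/p\integers$. Because $p$ divides every $n_j$, we have $G^{\text{ab}} \otimes_\integers K \cong K^k$, so $\dim_K(G^{\text{ab}} \otimes_\integers K) = |X|$. Combining this with the standard isomorphism $\mathfrak{b}/\mathfrak{b}^2 \cong G^{\text{ab}} \otimes_\integers K$ --- which falls out of tensoring~\eqref{eq:group} with $B/\mathfrak{b}$ and recognising the resulting cokernel as $K^{(X)}$ modulo the images of the abelianized Fox derivatives of the relators --- yields $\dim_K(\mathfrak{b}/\mathfrak{b}^2) = |X|$.

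Now I would invoke Theorem~\ref{thm:exact}. The Fox resolution~\eqref{eq:group} supplies an exact sequence of left $B$-modules of the form~\eqref{eq:needed}, with $D_1 = X$ and $D_2 = R$; by the previous paragraph $|D_1| = \dim_K(\mathfrak{b}/\mathfrak{b}^2)$, so the first hypothesis of Theorem~\ref{thm:exact} holds. If $|R| \le \frac{1}{4}|X|^2$ then the second hypothesis would also hold, forcing $\dim_K B \ge \aleph_0$; but $\dim_K B = |G|$ is finite, a contradiction. Hence $|R| > \frac{1}{4}|X|^2$, as claimed.

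The main obstacle is the dimension-matching step, which is exactly where the hypothesis $|X| = d(G^{\text{ab}})$ is used: Theorem~\ref{thm:exact} is delicate with respect to $\dim_K(\mathfrak{b}/\mathfrak{b}^2)$, and this dimension is controlled by the abelianization rather than by $G$ itself. Picking a prime $p$ dividing $n_1$ is what turns the a priori inequality $\dim_K(\mathfrak{b}/\mathfrak{b}^2) \le |X|$ into the equality needed to apply the theorem; any weaker hypothesis involving only $d(G)$ or $|X|$ would leave a slack that Theorem~\ref{thm:exact} cannot absorb.
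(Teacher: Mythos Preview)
Your argument is correct and follows essentially the same route as the paper: choose a prime $p$ dividing the smallest invariant factor of $G^{\text{ab}}$ so that $\dim_K(\mathfrak{b}/\mathfrak{b}^2) = |X|$ over $K = \integers/p\integers$, and then apply Theorem~\ref{thm:exact} to the Fox resolution~\eqref{eq:group}. The only cosmetic difference is that the paper phrases the invariant-factor decomposition via a chain of ideals $I_1 \subseteq \cdots \subseteq I_d$ of $\integers$ and simply cites the isomorphism $\mathfrak{b}/\mathfrak{b}^2 \simeq K \otimes_{\integers} G^{\text{ab}}$ as well known.
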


\vspace{-5mm}

\begin{proof} Since \mbox{$ G^{\text{ab}}$}  is a finite abelian group,  \mbox{$G^{\text{ab}} \simeq \bigoplus\limits_{i=1}^d (\integers/I_i)$}
for some    finite  chain \mbox{$I_1 \subseteq I_2 \subseteq \cdots \subseteq I_d$} of proper  ideals of $\integers$.
Let  $p$ be a prime number such that \mbox{$I_d \subseteq p\integers$}, and set   \mbox{$K:= \integers/p\integers$}.
Then  \mbox{$ K\otimes_{\integers} G^{\text{ab}} \simeq K^d$}, and
\mbox{$d = \dim_K(K\otimes_{\integers} G^{\text{ab}})   \le \operatorname{d}(G^{\text{ab}}) \le d$}.  Thus,
\mbox{$\dim_K(K\otimes_{\integers} G^{\text{ab}}) = \operatorname{d}(G^{\text{ab}}) =  \vert X \vert$}.

   It is well known and straightforward
to prove that \mbox{$\mathfrak{b}/\mathfrak{b}^2 \simeq  K\otimes_{\integers} G^{\text{ab}} $} with
  \mbox{$\textstyle (g{-}1) + \mathfrak{b}^2  \leftrightarrow 1\otimes gG'$}.
Then   \mbox{$\dim_K\mathfrak{b}/\mathfrak{b}^2 = \dim_K(K\otimes_{\integers} G^{\text{ab}}) =   \vert X \vert$}.
The result now follows from Theorem~\ref{thm:exact} applied to~\eqref{eq:group}.
\end{proof}

\begin{corollary}[{\normalfont  Golod-Shafarevich}]    Let $p$ be a prime number,
and  \mbox{$\langle X \mid R \rangle$} be
a presentation of a nontrivial,  finite $p$-group $G$.
If
\mbox{$\vert X \vert=\operatorname{d}(G)$}, then
   \mbox{$\vert R \vert > \frac{1}{4} (\operatorname{d}(G))^2$}.
\end{corollary}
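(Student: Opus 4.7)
The plan is to deduce this corollary directly from Theorem~\ref{thm:new} by verifying the identity $\operatorname{d}(G) = \operatorname{d}(G^{\text{ab}})$ for every nontrivial finite $p$-group $G$. Once that identification is in place, the hypothesis $\vert X \vert = \operatorname{d}(G)$ upgrades to $\operatorname{d}(G) = \operatorname{d}(G^{\text{ab}}) = \vert X \vert$, which is exactly the (equivalent) hypothesis of Theorem~\ref{thm:new}, so its conclusion $\vert R \vert > \frac{1}{4}(\operatorname{d}(G))^2$ is inherited.

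To check the identity, I would set $K := \integers/p\integers$ and write $\Phi(G) = G'G^p$ for the Frattini subgroup of $G$. On the one hand, the Burnside basis theorem (already invoked in the historical remarks above) asserts that $\operatorname{d}(G) = \dim_K(G/\Phi(G))$. On the other hand, $G^{\text{ab}} = G/G'$ is a finite abelian $p$-group, and since $G' \subseteq \Phi(G)$ there is a canonical identification
\[G^{\text{ab}}/p{\cdot}G^{\text{ab}} \iso G/(G'G^p) = G/\Phi(G).\]
For any finite abelian $p$-group $A$, the minimum number of generators of $A$ equals $\dim_K(A/pA)$; applied to $A = G^{\text{ab}}$, this yields $\operatorname{d}(G^{\text{ab}}) = \dim_K(G/\Phi(G))$. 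Comparing the two formulas gives $\operatorname{d}(G) = \operatorname{d}(G^{\text{ab}})$, as required.

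There is no real obstacle: all the weight of the argument rests on Theorem~\ref{thm:new} (proved via Theorem~\ref{thm:exact} applied to the Fox resolution~\eqref{eq:group}) together with the classical Burnside basis theorem. The corollary is essentially a repackaging of the finite-group statement under the $p$-group hypothesis, which is precisely what collapses $\operatorname{d}(G)$ to $\operatorname{d}(G^{\text{ab}})$ and brings Theorem~\ref{thm:new} to bear.
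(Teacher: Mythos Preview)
Your proposal is correct and follows essentially the same route as the paper: invoke the Burnside basis theorem to obtain $\operatorname{d}(G) = \dim_K(K\otimes_{\integers} G^{\text{ab}})$ for $K = \integers/p\integers$, conclude $\operatorname{d}(G) = \operatorname{d}(G^{\text{ab}})$, and then apply the second part of Theorem~\ref{thm:new}. The paper's proof is terser---it suppresses the Frattini subgroup and the explicit identification $G^{\text{ab}}/pG^{\text{ab}} \simeq G/\Phi(G)$ that you spell out---but the content is identical.
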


 \begin{proof}  By the Burnside  basis theorem,  \mbox{$\operatorname{d}(G)
= \dim_K(K\otimes_{\integers} G^{\text{ab}})$} for \mbox{$K = \integers/p\integers$}.
Hence,  \mbox{$\operatorname{d}(G) = \operatorname{d}(G^{\text{ab}})$}, and the
 result    follows from the second part of Theorem~\ref{thm:new}.
 \end{proof}

\begin{definitions}[continued]
For \mbox{$f \in F{-}\{0\}$}, we set
\mbox{$\deg(f):= \max \{ i \in \naturals : f \in \mathfrak{f}^{\mkern1mu i} \}$}.
For each $r \in R$, we have then defined  \mbox{$\deg(r{-}1) \in \naturals-\{0\}$}, unless $r = 1$ in~$F$,
in which case we shall choose some  value  \mbox{$\deg(r{-}1) \in \naturals-\{0\}$}.
Then \mbox{ $\frac{\partial (r-1)}{\partial (x-1)}  \in \mathfrak{f}^{\,\deg(r-1) - 1}$}, for each \mbox{$x \in X$}.

Let $n$ range over $\integers$.
Define \mbox{$\mathfrak{b}^{n}$} to be $B$ if \mbox{$n\le 0$}, and,
as usual,  to be  \mbox{$\mathfrak{b}^{n-1}{\cdot}\mathfrak{b}$} if \mbox{$n \ge 1$}.
In~\eqref{eq:group}, we find, for each \mbox{$r \in R$},     $$\textstyle \partial (\mathfrak{b}^n \otimes_K Kr) \subseteq
 \bigoplus\limits_{x \in X}  (\mathfrak{b}^{n+\text{deg}(r-1)-1}  \otimes_K Kx), \text{ and, hence, }
 \partial (\mathfrak{b}^{n-\text{deg}(r-1)+1} \otimes_K Kr) \subseteq
 \bigoplus\limits_{x \in X}  (\mathfrak{b}^{n}  \otimes_K Kx).$$
On applying \mbox{$(B/\mathfrak{b}^{n})\otimes_B-$} to~\eqref{eq:group}, we get
an exact left-$B$-module sequence
\begin{equation*}
\textstyle  \bigoplus\limits_{r \in R} (B/\mathfrak{b}^{n-\text{deg}(r-1)+1}) \otimes_K Kr
\to
 \bigoplus\limits_{x \in X} (B/\mathfrak{b}^{n}) \otimes_K Kx
 \to (\mathfrak{b}+\mathfrak{b}^{n+1})/\mathfrak{b}^{n+1}  .
\end{equation*}
Set \mbox{$\mathbf{a}_n:= \dim_K(B/\mathfrak{b}^{n+1})$}, and
define \mbox{$\delta_n$} to be $0$ if \mbox{$n \le -1$} and to be~$1$ if \mbox{$n  \ge 0$}. Since $K$ is a field, \mbox{$
\textstyle \vert X \vert \mathbf{a}_{n-1}  \le (\sum\limits_{r \in R} \mathbf{a}_{n-\text{deg}(r-1)})
+ (\mathbf{a}_{n} - \delta_{n}).$}

 We set \mbox{$R_n:= \{ r \in R : \deg(r-1) = n\}$} and \mbox{$\mathbf{r}_n := \vert R_n\vert$}.
We henceforth restrict to the case where
 \mbox{$\vert X \vert$},~\mbox{$\mathbf{r}_n \in \naturals$}; as in the proof of Theorem~\ref{thm:exact},
\mbox{$\mathbf{a}_n \in \naturals$}.
We define   \mbox{$\operatorname{h}(R):= \sum\limits_{n \in \integers} \mathbf{r}_nt^n\in \reals[[t]]$}. We
define  \mbox{$\operatorname{h}(X)$} similarly, and find \mbox{$\operatorname{h}(X) = \vert X \vert t$}.
Set\vspace{-2mm}
\mbox{$\mathbf{b}_n:= \dim_K(\mathfrak{b}^{n}/\mathfrak{b}^{n+1}) = \mathbf{a}_n - \mathbf{a}_{n-1}$}
and   \mbox{$\operatorname{H}(B):= \sum\limits_{n \in \integers} \mathbf{b}_nt^n$}.
Notice that   \mbox{$(1-t){\cdot}\sum\limits_{n \in \integers} \mathbf{a}_nt^n = \operatorname{H}(B)$}
and \mbox{$\sum\limits_{n \in \integers} \delta_nt^n = (1-t)^{-1}$}.  Now \vspace{-1.5mm}
$$\bigl(1 - \operatorname{h}(X) + \operatorname{h}(R)\bigr) {\cdot}
(1-t)^{-1}{\cdot} \operatorname{H}(B) \succeq (1-t)^{-1}.$$
This is the form of Vinberg's inequality for filtered algebras~\cite{V}. The method of proof outlined here
 is based on the proof of Theorem~\ref{thm:exact} above, which, in turn,  may have been suggested by Vinberg's work.

Set \mbox{$\alpha:= 1 - \operatorname{h}(X) + \operatorname{h}(R)
\in \reals[[t]]$}. We claim that if there exists some
\mbox{$\varepsilon \in [0,1]$} such that the real series resulting
from replacing the $t$s in \mbox{$\alpha$} with $\varepsilon$s  converges
to a value \mbox{$\alpha(\varepsilon)\in\,  \left]-\infty,0\right]$}, then
 \mbox{$\operatorname{H}(B)   \not \in   \reals[t]$}, and, in particular,
  \mbox{$\dim_K(B) = \aleph_0$}.  This is clear if \mbox{$\varepsilon \ne 1$}.
If  \mbox{$\varepsilon = 1$},  then \mbox{$\alpha  \in \integers[t]$};
here, if \mbox{$\alpha(1) \ne 0$}, we may replace \mbox{$\varepsilon$} with a value slightly smaller than $1$
to pass to the preceding case,
while if \mbox{$\alpha(1) = 0$}, then \mbox{$\alpha {\cdot}(1-t)^{-1} \in \integers[t]$}, and the
desired conclusion holds.  Thus the claim is proved.
If  \mbox{$\vert X \vert \ge 1$}, \mbox{$\mathbf{r}_1 = 0$}, and \mbox{$\vert R \vert \le \frac{1}{4}\vert X \vert^2$},
then  we may take \mbox{$\varepsilon$} to be \mbox{$\min\{\frac{2}{\vert X \vert}, 1\}$} to
 recover Theorem~\ref{thm:new}.  Notice that
 \mbox{$\mathbf{r}_1 = 0$} if and only if \mbox{$\mathbf{b}_1 = \vert X \vert$}, and
 if \mbox{$ \vert X \vert = 1$}, then \mbox{$\lfloor \frac{1}{4}\vert X \vert^2\rfloor  =  0 $}.
 \end{definitions}

\begin{history}  Suppose that \mbox{$G = \langle X \mid R \rangle$} is a group presentation such
 that  \mbox{$\operatorname{d}(G^{\text{\normalfont ab}}) = \vert X \vert < \aleph_0$}.
 Theorem~\ref{thm:new} says that if \mbox{$\vert R \vert \le \frac{1}{4} \vert X \vert^2$},
then  either $G$ is trivial $\bigl($where \mbox{$(\vert X \vert, \vert R \vert) = (0,0)\bigr)$} or $G$ is infinite.
  Wilson~\cite{Wilson} showed   that if \mbox{$\vert R \vert < \frac{1}{4} \vert X \vert^2$},
then either $G$ is infinite cyclic $\bigl($where \mbox{$(\vert X \vert, \vert R \vert) = (1,0)\bigr)$}
or $G$ maps onto a residually finite, infinite   $p$-group,
for some prime $p$.
His proof is based on Vinberg's inequality and the methods of Golod~\cite{G}.
 A  recent introduction to related results can be found in~\cite{Ershov}.
\end{history}

\end{document}